\documentclass[reqno]{amsart}
\usepackage{url}
\usepackage[dvips]{graphicx}

\newtheorem{theorem}{Theorem}
\newtheorem{lemma}{Lemma}
\newtheorem{definition}{Definition}
\begin{document}
\bibliographystyle{plain}

\title[Constructive proof of the existence of equilibrium]{Constructive proof of the existence of equilibrium in a competitive economy with sequentially locally non-constant excess demand functions}

\author{Yasuhito Tanaka}
\address{Faculty of Economics, Doshisha University, Kamigyo-ku, Kyoto, 602-8580, Japan}
\email{yasuhito@mail.doshisha.ac.jp}
\thanks{This research was partially supported by the Ministry of Education, Science, Sports and Culture of Japan, Grant-in-Aid for Scientific Research (C), 20530165.}\address{Faculty of Economics, Doshisha University, Kamigyo-ku, Kyoto, 602-8580, Japan}

\date{}

\keywords{sequentially locally non-constant excess demand functions}

\subjclass[2000]{Primary~26E40, Secondary~}

\maketitle

\begin{abstract}
We present a constructive proof of the existence of an equilibrium in a competitive economy with sequentially locally non-constant excess demand functions. And we will show that the existence of such an equilibrium implies Sperner's lemma. Since the existence of an equilibrium is derived from the existence an approximate fixed point of uniformly continuous functions, which is derived from Sperner's lemma, the existence of an equilibrium in a competitive economy with sequentially locally non-constant excess demand functions is equivalent to Sperner's lemma.
\end{abstract}

\section{Introduction}
\label{sec:intro}

It is well known that Brouwer's fixed point theorem can not be constructively proved\footnote{\cite{kel} provided a \emph{constructive} proof of Brouwer's fixed point theorem. But it is not constructive from the view point of constructive mathematics \'{a} la Bishop. It is sufficient to say that one dimensional case of Brouwer's fixed point theorem, that is, the intermediate value theorem is non-constructive. See \cite{br} or \cite{da}.}. Thus, the existence of a competitive equilibrium also can not be constructively proved. Sperner's lemma which is used to prove Brouwer's theorem, however, can be constructively proved. Some authors have presented an approximate version of Brouwer's theorem using Sperner's lemma. See \cite{da} and \cite{veld}.  Thus, Brouwer's fixed point theorem is constructively, in the sense of constructive mathematics \'{a} la Bishop, proved in its approximate version.

Also \cite{da} states a conjecture that a uniformly continuous function $\varphi$ from a simplex into itself, with property that each open set contains a point $x$ such that $x\neq f(x)$, which means $|x-f(x)|>0$, and also at every point $x$ on the boundaries of the simplex $x\neq f(x)$, has an exact fixed point. We call such a property of functions \emph{local non-constancy}. Recently \cite{berger} showed that the following theorem is equivalent to Brouwer's fan theorem.
\begin{quote}
Each uniformly continuous function $\varphi$ from a compact metric space $X$ into itself with at most one fixed point and approximate fixed points has a fixed point.
\end{quote}
By reference to the notion of \emph{sequentially at most one maximum} in \cite{berg} we consider a condition that a function $\varphi$ is \emph{sequentially locally non-constant}. Sequential local non-constancy is stronger than the condition in \cite{da} (local non-constancy), and is different from the condition that a function has \emph{at most one fixed point} in \cite{berg}.

\cite{orevkov} constructed a computably coded continuous function $\varphi$ from the unit square into itself, which is defined at each computable point of the square, such that $\varphi$ has no computable fixed point. His map consists of a retract of the computable elements of the square to its boundary followed by a rotation of the boundary of the square. As pointed out by \cite{hirst}, since there is no retract of the square to its boundary, his map does not have a total extension.

In this paper we present a proof of the existence of an exact equilibrium in a competitive  economy with \emph{sequentially locally non-constant} excess demand functions. Also we will show that the existence of such an equilibrium implies Sperner's lemma. Since the existence of an equilibrium is derived from the existence an approximate fixed point of uniformly continuous functions, which is derived from Sperner's lemma, the existence of an equilibrium in a competitive economy with sequentially locally non-constant excess demand functions is equivalent to Sperner's lemma.

In the next section we present our theorem and its proof. In Section 3 we will show that the existence of an equilibrium in a competitive economy with sequentially locally non-constant excess demand functions implies Sperner's lemma.

\section{Existence of equilibrium in a competitive economy}

We consider an $n$-dimensional simplex $\Delta$ as a compact metric space. Let $\mathbf{p}$ be a point in $\Delta$, and consider a uniformly continuous function $\varphi$ from $\Delta$ into itself.

 According to \cite{da} and \cite{veld} $\varphi$ has an approximate fixed point. It means 
\[\mathrm{For\ each}\ \varepsilon>0\ \mathrm{there\ exists}\ \mathbf{p}\in \Delta\ \mathrm{such\ that}\ |\mathbf{p}-\varphi(\mathbf{p})|<\varepsilon.\]
Since $\varepsilon>0$ is arbitrary,
\[\inf_{\mathbf{p}\in \Delta}|\mathbf{p}-\varphi(\mathbf{p})|=0.\]

The definition of local non-constancy of functions in a simplex $\Delta$ is as follows;
\begin{definition}(Local non-constancy of function)
\begin{enumerate}
	\item At a point $\mathbf{p}$ on the faces (boundaries) of $\Delta$ $\varphi(\mathbf{p})\neq \mathbf{p}$. This means $\varphi_i(\mathbf{p})>\mathbf{p}_i$ or $\varphi_i(\mathbf{p})<\mathbf{p}_i$ for at least one $i$, where $\mathbf{p}_i$ and $\varphi_i(\mathbf{p})$ are the $i$-th components of $\mathbf{p}$ and $\varphi(\mathbf{p})$.
	\item And in any open set in $\Delta$ there exists a point $\mathbf{p}$ such that $\varphi(\mathbf{p})\neq \mathbf{p}$.
\end{enumerate}
\end{definition}

On the other hand, the notion that $\varphi$ has at most one fixed point in \cite{berger} is defined as follows;
\begin{definition}[At most one fixed point]
For all $\mathbf{p}, \mathbf{q}\in \Delta$, if $\mathbf{p}\neq \mathbf{q}$, then $\varphi(\mathbf{p})\neq \mathbf{p}$ or $\varphi(\mathbf{q})\neq \mathbf{q}$.
\end{definition}

By reference to the notion of \emph{sequentially at most one maximum} in \cite{berg},  we define the property of \emph{sequential local non-constancy}.

First we recapitulate the compactness (total boundedness with completeness) of a set in constructive mathematics. $\Delta$ is totally bounded in the sense that for each $\varepsilon>0$ there exists a finitely enumerable $\varepsilon$-approximation to $\Delta$\footnote{A set $S$ is finitely enumerable if there exist a natural number $N$ and a mapping of the set $\{1, 2, \dots, N\}$ onto $S$.}. An $\varepsilon$-approximation to $\Delta$ is a subset of $\Delta$ such that for each $\mathbf{p}\in \Delta$ there exists $\mathbf{q}$ in that $\varepsilon$-approximation with $|\mathbf{p}-\mathbf{q}|<\varepsilon$. Each face (boundary) of $\Delta$ is also a simplex, and so it is totally bounded. According to Corollary 2.2.12 of \cite{bv} we have the following result.
\begin{lemma}
For each $\varepsilon>0$ there exist totally bounded sets $H_1, H_2, \dots, H_n$, each of diameter less than or equal to $\varepsilon$, such that $\Delta=\cup_{i=1}^nH_i$.\label{closed}
\end{lemma}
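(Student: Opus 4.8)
The plan is to read the decomposition off from the total boundedness of $\Delta$; in fact the statement is essentially Corollary 2.2.12 of \cite{bv} specialised to a simplex, so one legitimate proof is the single line ``apply that corollary.'' For a self-contained argument I would instead use the explicit construction available in the simplicial setting. Fix $\varepsilon>0$ and iterate the barycentric subdivision of $\Delta$ (or pass to a sufficiently fine uniform subdivision) until every cell has diameter at most $\varepsilon$; only finitely many, effectively bounded, steps are needed, since each barycentric subdivision shrinks the mesh by a fixed factor strictly less than $1$. Let $H_1,\dots,H_N$ be the resulting closed subsimplices (this $N$ is the ``$n$'' of the statement, not the dimension of $\Delta$). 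Each $H_i$ is itself a simplex, hence compact and in particular totally bounded; each has diameter $\leq\varepsilon$ by construction; and $\Delta=\bigcup_{i=1}^N H_i$ because the subdivision covers $\Delta$. That already establishes the lemma.

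The route behind the cited corollary is worth recording as well. Take a finitely enumerable $\varepsilon/2$-approximation $\{\mathbf{q}_1,\dots,\mathbf{q}_N\}$ to $\Delta$ and put $H_i=\{\mathbf{p}\in\Delta:|\mathbf{p}-\mathbf{q}_i|\leq\varepsilon/2\}$. The covering $\Delta=\bigcup_i H_i$ holds because each $\mathbf{p}\in\Delta$ satisfies $|\mathbf{p}-\mathbf{q}_i|<\varepsilon/2$ for some $i$, and $\operatorname{diam}H_i\leq\varepsilon$ is immediate from the triangle inequality. The delicate point — the reason one invokes \cite{bv} rather than dispatching this by hand — is that each $H_i$ must be shown \emph{totally bounded}, which for a subset of a totally bounded set is not constructively automatic: one checks that $H_i$ is located in $\Delta$ and uses that a located subset of a totally bounded set is totally bounded.

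I expect that last issue to be the only genuine obstacle: guaranteeing total boundedness of the pieces, not merely their boundedness. The simplicial subdivision makes it costless, which is why I would lead with it; the covering and the diameter estimates are in any case routine triangle-inequality computations.
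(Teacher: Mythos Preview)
Your proposal is correct, and it matches the paper's treatment: the paper does not give an independent proof of this lemma but simply cites Corollary~2.2.12 of \cite{bv}, which is precisely your first suggested line. Your self-contained barycentric-subdivision argument and your sketch of the corollary's own proof go beyond what the paper provides, but they are sound and the point you flag---that total boundedness of the pieces is the only nontrivial step constructively---is exactly the reason the paper defers to \cite{bv}.
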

Since $\inf_{\mathbf{p}\in \Delta}|\mathbf{p}-\varphi(\mathbf{p})|=0$, we have $\inf_{\mathbf{p}\in H_i}|\mathbf{p}-\varphi(\mathbf{p})|=0$ for some $H_i\subset \Delta$.

The definition of sequential local non-constancy is as follows;
\begin{definition}(Sequential local non-constancy of functions)
There exists $\bar{\varepsilon}>0$ with the following property. For each $\varepsilon>0$ less than or equal to $\bar{\varepsilon}$ there exist totally bounded sets $H_1, H_2, \dots, H_m$, each of diameter less than or equal to $\varepsilon$, such that $\Delta=\cup_{i=1}^m H_i$, and if for all sequences $(\mathbf{p}_n)_{n\geq 1}$, $(\mathbf{q}_n)_{n\geq 1}$ in each $H_i$, $|\varphi(\mathbf{p}_n)-\mathbf{p}_n|\longrightarrow 0$ and $|\varphi(\mathbf{q}_n)-\mathbf{q}_n|\longrightarrow 0$, then $|\mathbf{p}_n-\mathbf{q}_n|\longrightarrow 0$.
\end{definition}

Now we show the following lemma, which is based on Lemma 2 of \cite{berg}.
\begin{lemma}
Let $\varphi$ be a uniformly continuous function from $\Delta$ into itself. Assume $\inf_{\mathbf{p}\in H_i}\varphi(\mathbf{p})=0$ for some $H_i\subset \Delta$ defined above. If the following property holds:
\begin{quote}
For each $\delta>0$ there exists $\eta>0$ such that if $\mathbf{p}, \mathbf{q}\in H_i$, $|\varphi(\mathbf{p})-\mathbf{p}|<\eta$ and $|\varphi(\mathbf{q})-\mathbf{q}|<\eta$, then $|\mathbf{p}-\mathbf{q}|\leq \delta$.
\end{quote}
Then, there exists a point $\mathbf{r}\in H_i$ such that $\varphi(\mathbf{r})=\mathbf{r}$. \label{fix0}
\end{lemma}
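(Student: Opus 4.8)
The plan is to follow the pattern of Lemma 2 of \cite{berg}: turn the hypothesis $\inf_{\mathbf{p}\in H_i}|\varphi(\mathbf{p})-\mathbf{p}|=0$ into an explicit sequence of approximate fixed points in $H_i$, use the quantitative ``$\delta$--$\eta$'' condition to show that this sequence is Cauchy with an explicit modulus of convergence, and then pass to the limit using the uniform continuity of $\varphi$ to produce an exact fixed point.

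First I would construct, for each $n\geq 1$, a point $\mathbf{p}_n\in H_i$ with $|\varphi(\mathbf{p}_n)-\mathbf{p}_n|<\frac{1}{n}$; this is possible precisely because the infimum of $|\varphi(\mathbf{p})-\mathbf{p}|$ over $H_i$ is $0$, so in particular $|\varphi(\mathbf{p}_n)-\mathbf{p}_n|\longrightarrow 0$. Next I would verify that $(\mathbf{p}_n)_{n\geq 1}$ is a Cauchy sequence: given $\delta>0$, take $\eta>0$ as supplied by the hypothesis and choose $N$ with $\frac{1}{N}\leq\eta$; then for all $m,n\geq N$ we have $|\varphi(\mathbf{p}_n)-\mathbf{p}_n|<\eta$ and $|\varphi(\mathbf{p}_m)-\mathbf{p}_m|<\eta$, hence $|\mathbf{p}_n-\mathbf{p}_m|\leq\delta$. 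Since the assignment $\delta\mapsto N$ is explicit, this furnishes a modulus of convergence, and because $H_i$ is complete (as a totally bounded, indeed closed, subset of the complete space $\Delta$; if the $H_i$ of Lemma~\ref{closed} are only asserted totally bounded, one replaces $H_i$ by its closure, which keeps the diameter bound and the hypotheses), the sequence converges to some $\mathbf{r}\in H_i$. Finally, by uniform continuity $\varphi(\mathbf{p}_n)\longrightarrow\varphi(\mathbf{r})$, so
\[|\varphi(\mathbf{r})-\mathbf{r}|\leq|\varphi(\mathbf{r})-\varphi(\mathbf{p}_n)|+|\varphi(\mathbf{p}_n)-\mathbf{p}_n|+|\mathbf{p}_n-\mathbf{r}|\longrightarrow 0,\]
whence $|\varphi(\mathbf{r})-\mathbf{r}|=0$, that is, $\varphi(\mathbf{r})=\mathbf{r}$.

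The Cauchy estimate and the limit argument are routine; the step that actually carries the content — and the only place the special hypothesis enters — is the derivation that $(\mathbf{p}_n)$ is Cauchy, where the quantitative condition is exactly what upgrades a bare approximate-fixed-point sequence into one that converges constructively with a modulus. The one technical wrinkle I expect to need care is the completeness of $H_i$: since the lemma only grants total boundedness, one should either invoke that these particular $H_i$ are closed in $\Delta$ or pass to their closures before extracting the limit point, so that $\mathbf{r}$ genuinely lies in $H_i$.
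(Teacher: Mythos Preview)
Your argument is essentially identical to the paper's: construct a sequence of approximate fixed points from $\inf_{\mathbf{p}\in H_i}|\varphi(\mathbf{p})-\mathbf{p}|=0$, use the $\delta$--$\eta$ hypothesis to show it is Cauchy, and pass to the limit by continuity. Your version is in fact slightly more careful than the paper's, which simply asserts that the limit $\mathbf{r}$ lies in $H_i$ without addressing the completeness issue you flag.
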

\begin{proof}
Choose a sequence $(\mathbf{p}_n)_{n\geq 1}$ in $H_i$ such that $|\varphi(\mathbf{p}_n)-\mathbf{p}_n|\longrightarrow 0$. Compute $N$ such that $|\varphi(\mathbf{p}_n)-\mathbf{p}_n|<\eta$ for all $n\geq N$. Then, for $m, n\geq N$ we have $|\mathbf{p}_m-\mathbf{p}_n|\leq \delta$. Since $\delta>0$ is arbitrary, $(\mathbf{p}_n)_{n\geq 1}$ is a Cauchy sequence in $H_i$, and converges to a limit $\mathbf{r}\in H_i$. The continuity of $\varphi$ yields $|\varphi(\mathbf{r})-\mathbf{r}|=0$, that is, $\varphi(\mathbf{r})=\mathbf{r}$.
\end{proof}

Consider a competitive exchange economy. There are $n+1$ goods $X_0$, $X_1$, $\cdots$, $X_n$. $n$ is a finite positive integer. The prices of the goods are denoted by $p_i(\geq 0),\ i=0,1,\cdots,n$. Let $\bar{p}=p_0+p_1+\cdots +p_n$, and define
\[ \bar{p}_i=\frac{p_i}{\bar{p}},\ i=0,1,\cdots,n.\]
Denote afresh $\bar{p}_0$, $\bar{p}_1$, $\cdots$, $\bar{p}_n$, respectively, by $p_0$, $p_1$, $\cdots$, $p_n$. Then,
\begin{equation}
p_0+p_1+\cdots +p_n=1. \label{wal1}
\end{equation}
$\mathbf{p}=(p_0,p_1,\cdots,p_n)$ represents a point on an $n$-dimensional simplex. It is usually assumed that consumers' excess demand (demand minus supply) for each good is homogeneous of degree zero. It means that consumers' excess demand for each good is determined by relative prices of the goods, and above notation of the prices yields no loss of generality. We denote the vector of excess demands for the goods when the vector of prices is $\mathbf{p}$ by $\mathbf{f}(\mathbf{p})=(f_1, f_2, \dots, f_n)$. We require the following condition;
\begin{equation}
\mathbf{p}\mathbf{f}(\mathbf{p})=p_0f_0+p_1f_1+\cdots +p_nf_n=0\ \mathrm{(Walras\ Law)}. \label{wal}
\end{equation}
$f_i$ is equal to the sum of excess demands of all consumers for the good $X_i$. By the budget constraint for each consumer, in a competitive exchange economy the sum of excess demands of all consumers for each good must be 0. Adding the budget constraints of all consumers yields (\ref{wal}). We assume that the excess demand function $\mathbf{f}(\mathbf{p})$ is uniformly continuous about the prices of the goods. Uniform continuity of $\mathbf{f}$ means that for any $\mathbf{p}$, $\mathbf{p}'$ and $\eta>0$ there exists $\delta>0$ such that
\[\mathrm{If}\ |\mathbf{p}'-\mathbf{p}|<\delta, \mathrm{we\ have}\ |\mathbf{f}(\mathbf{p}')-\mathbf{f}(\mathbf{p})|<\eta.\]
$\delta$ depends only on $\eta$ not on $\mathbf{p}$ and $\mathbf{p}'$. It implies that a slight price change yields only a slight excess demand change. An equilibrium of a competitive exchange economy is a state where excess demand for each good is not positive.

Next we assume the following condition.
\begin{definition}[Sequential local non-constancy of excess demand functions]
There exists $\bar{\varepsilon}>0$ with the following property. For each $\varepsilon>0$ less than or equal to $\bar{\varepsilon}$ there exist totally bounded sets $H_1, H_2, \dots, H_m$, each of diameter less than or equal to $\varepsilon$, such that $\Delta=\cup_{i=1}^m H_i$, and if for all sequences $(\mathbf{p}_n)_{n\geq 1}$, $(\mathbf{q}_n)_{n\geq 1}$ in each $H_i$, $\max(f_i(\mathbf{p}_n),0)\longrightarrow 0$ and $\max(f_i(\mathbf{q}_n),0)\longrightarrow 0$ for all $i$, then $|\mathbf{p}_n-\mathbf{q}_n|\longrightarrow 0$. \label{sld}
\end{definition}
This condition implies isolatedness of equilibria.

Consider the following function from the set of price vectors $\mathbf{p}=(p_0,p_1,\cdots,p_n)$ to the set of $n+1$ tuples of real numbers $\mathbf{v}=(v_0,v_1,\cdots,v_n)$.
\[v_i=p_i+\max(f_i,0)\ \mathrm{for\ all}\ i.\]
With this we define a function from an $n$-dimensional simplex $\Delta$ to itself $\varphi(\mathbf{p})=(\varphi_0,\varphi_1,\cdots,\varphi_n)$ as follows;
\[ \varphi_i=\frac{1}{v_0+v_1+\cdots +v_n}v_i,\ \mathrm{for\ all}\ i\]
Since $\varphi_i\geq 0,\ i=0, 1, \cdots, n$ and
\[\varphi_0+\varphi_1+\cdots + \varphi_n=1,\]
$(\varphi_0,\varphi_1,\cdots,\varphi_n)$ represents a point on $\Delta$. By the uniform continuity of $\mathbf{f}$, $\varphi$ is also uniformly continuous. 

If $|\varphi(\mathbf{p}_n)-\mathbf{p}_n|\longrightarrow 0$, then $\left|\frac{p_i+\max(f_i, 0)}{1+\sum_{j=0}^n\max(f_j, 0)}\right|_n\longrightarrow p_i$. From $\sum_{j=0}^np_j=1$ there is a $k$ such that $p_k>0$. If for all such $k$ $\max(f_k,0)=f_k>0$ holds, we can not cancel out $p_kf_k>0$ because the price of any good can not be negative, and the Walras law (\ref{wal}) is violated. Therefore, $\max(f_i, 0)\longrightarrow 0$ for all $i$.

Thus, from the sequential local non-constancy of excess demand functions we obtain the following results;
\begin{quote}
For $\bar{\varepsilon}>0$ defined in Definition \ref{sld} and each $\varepsilon>0$ less than or equal to $\bar{\varepsilon}$ there exist totally bounded sets $H_1, H_2, \dots, H_m$, each of diameter less than or equal to $\varepsilon$, such that $\Delta=\cup_{i=1}^m H_i$, and if for all sequences $(\mathbf{p}_n)_{n\geq 1}$, $(\mathbf{q}_n)_{n\geq 1}$ in each $H_i$, $|\varphi(\mathbf{p}_n)-\mathbf{p}_n|\longrightarrow 0$ and $|\varphi(\mathbf{q}_n)-\mathbf{q}_n|\longrightarrow 0$, then $|\mathbf{p}_n-\mathbf{q}_n|\longrightarrow 0$.
\end{quote}
Therefore, $\varphi$ is a sequentially locally non-constant function. 

Now we show the following theorem.
\begin{theorem}
In a competitive exchange economy, if the excess demand functions are uniformly continuous about the prices of the goods, sequentially locally non-constant and satisfy the Walras law, then there exists an equilibrium.
\end{theorem}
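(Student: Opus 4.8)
The plan is to obtain an \emph{exact} fixed point of the uniformly continuous self-map $\varphi$ of $\Delta$ constructed above, and then to read off from the fixed-point equation and the Walras law (\ref{wal}) that excess demand is nonpositive in every coordinate, which is precisely the equilibrium condition.

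For the fixed-point part I would argue as follows. The approximate fixed point property quoted from \cite{da} and \cite{veld} gives $\inf_{\mathbf{p}\in\Delta}|\mathbf{p}-\varphi(\mathbf{p})|=0$. Applying Lemma \ref{closed} with some $\varepsilon\leq\bar{\varepsilon}$ (the $\bar{\varepsilon}$ of Definition \ref{sld}), write $\Delta=\cup_{i=1}^{m}H_i$ with each $H_i$ totally bounded of diameter $\leq\varepsilon$; then $\inf_{\mathbf{p}\in H_i}|\mathbf{p}-\varphi(\mathbf{p})|=0$ for at least one index $i$. On that $H_i$ I must produce the hypothesis of Lemma \ref{fix0}, namely that for each $\delta>0$ there is $\eta>0$ with $|\mathbf{p}-\mathbf{q}|\leq\delta$ whenever $\mathbf{p},\mathbf{q}\in H_i$, $|\varphi(\mathbf{p})-\mathbf{p}|<\eta$ and $|\varphi(\mathbf{q})-\mathbf{q}|<\eta$. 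What the construction preceding the theorem actually hands us is the \emph{sequential} statement of Definition \ref{sld} transported to $\varphi$: if $|\varphi(\mathbf{p}_n)-\mathbf{p}_n|\to0$ and $|\varphi(\mathbf{q}_n)-\mathbf{q}_n|\to0$ in $H_i$, then $|\mathbf{p}_n-\mathbf{q}_n|\to0$. The sequential form yields the $\varepsilon$--$\delta$ form: if the latter failed there would be $\delta>0$ and, taking $\eta=1/n$, points $\mathbf{p}_n,\mathbf{q}_n\in H_i$ with $|\varphi(\mathbf{p}_n)-\mathbf{p}_n|\to0$, $|\varphi(\mathbf{q}_n)-\mathbf{q}_n|\to0$, yet $|\mathbf{p}_n-\mathbf{q}_n|\geq\delta$, contradicting the sequential form --- but to make this move legitimate constructively one cannot simply negate the $\varepsilon$--$\delta$ statement, and instead argues inside the total boundedness of $H_i$ as in Lemma 2 of \cite{berg}. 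With the hypothesis of Lemma \ref{fix0} secured, that lemma produces $\mathbf{r}\in H_i\subset\Delta$ with $\varphi(\mathbf{r})=\mathbf{r}$.

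It remains to check that this $\mathbf{r}$ is an equilibrium, i.e.\ $f_i(\mathbf{r})\leq0$ for all $i$. Since $\sum_i p_i=1$, the denominator in the definition of $\varphi$ at $\mathbf{r}$ equals $1+\lambda$ with $\lambda=\sum_{j=0}^{n}\max(f_j(\mathbf{r}),0)\geq0$, and $\varphi_i(\mathbf{r})=r_i$ rearranges to $\max(f_i(\mathbf{r}),0)=r_i\lambda$ for every $i$. Suppose $\lambda>0$. From $\sum_i r_i=1$ one constructively finds $k$ with $r_k>0$, so $\sum_i r_i^{2}>0$; writing $\max(f_i(\mathbf{r}),0)=f_i(\mathbf{r})+\max(-f_i(\mathbf{r}),0)$ and using the Walras law (\ref{wal}) gives $\lambda\sum_i r_i^{2}=\sum_i r_i\max(-f_i(\mathbf{r}),0)$, whose left-hand side is positive, so some coordinate has $r_i>0$ and $f_i(\mathbf{r})<0$; but then $\max(f_i(\mathbf{r}),0)=0$ while $r_i\lambda>0$, a contradiction. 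Hence $\neg(\lambda>0)$, that is $\lambda\leq0$, so $\lambda=0$; therefore $\max(f_i(\mathbf{r}),0)=r_i\lambda=0$, i.e.\ $f_i(\mathbf{r})\leq0$, for every $i$, which is the equilibrium condition.

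I expect the single genuine obstacle to be the middle step of the second paragraph: converting the sequential local non-constancy of $\varphi$ on $H_i$ into the $\varepsilon$--$\delta$ input that Lemma \ref{fix0} demands, done without an illicit appeal to excluded middle --- this is the constructive core and rests on the total-boundedness argument of \cite{berg}. Everything else --- the reduction to $\varphi$, the localization via Lemma \ref{closed}, and the Walras-law computation (including the passage from $\neg(\lambda>0)$ to $\lambda=0$, which is immediate for a real number) --- is routine.
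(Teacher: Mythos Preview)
Your proposal is correct and follows essentially the same route as the paper: localize to an $H_i$ with $\inf_{\mathbf{p}\in H_i}|\mathbf{p}-\varphi(\mathbf{p})|=0$, upgrade the sequential local non-constancy of $\varphi$ on that $H_i$ to the $\varepsilon$--$\delta$ hypothesis of Lemma~\ref{fix0} via the increasing--binary--sequence device of \cite{berg} (which the paper writes out in full rather than merely citing), apply Lemma~\ref{fix0} to get an exact fixed point, and then use the Walras law to force $\lambda=\sum_j\max(f_j,0)=0$. Your Walras-law step is a bit more explicit than the paper's---you multiply $\max(f_i,0)=\lambda r_i$ by $r_i$, sum, and invoke (\ref{wal}), whereas the paper argues more informally that positive excess demand at every positively priced good would make $\sum_i p_i^* f_i>0$---but the two computations are equivalent and both constructively valid.
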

\begin{proof}
\begin{enumerate}
\item Choose a sequence $(\mathbf{r}_n)_{n\geq 1}$ in $H_i$ defined above such that $|\varphi(\mathbf{r}_n)-\mathbf{r}_n|\longrightarrow 0$. In view of Lemma \ref{fix0} it is enough to prove that the following condition holds.
\begin{quote}
For each $\delta>0$ there exists $\eta>0$ such that if $\mathbf{p}, \mathbf{q}\in H_i$, $|\varphi(\mathbf{p})-\mathbf{p}|<\eta$ and $|\varphi(\mathbf{q})-\mathbf{q}|<\eta$, then $|\mathbf{p}-\mathbf{q}|\leq \delta$.
\end{quote}
Assume that the set
\[K=\{(\mathbf{p},\mathbf{q})\in H_i\times H_i:\ |\mathbf{p}-\mathbf{q}|\geq \delta\}\]
is nonempty and compact\footnote{See Theorem 2.2.13 of \cite{bv}.}. Since the mapping $(\mathbf{p},\mathbf{q})\longrightarrow \max(\varphi(\mathbf{p})-\mathbf{p}|,|\varphi(\mathbf{q})-\mathbf{q}|)$ is uniformly continuous, we can construct an increasing binary sequence $(\lambda_n)_{n\geq 1}$ such that
\[\lambda_n=0\Rightarrow \inf_{(\mathbf{p},\mathbf{q})\in K}\max(|\varphi(\mathbf{p})-\mathbf{p}|,|\varphi(\mathbf{q})-\mathbf{q}|)<2^{-n},\]
\[\lambda_n=1\Rightarrow \inf_{(\mathbf{p},\mathbf{q})\in K}\max(|\varphi(\mathbf{p})-\mathbf{p}|,|\varphi(\mathbf{q})-\mathbf{q}|)>2^{-n-1}.\]
It suffices to find $n$ such that $\lambda_n=1$. In that case, if $|\varphi(\mathbf{p})-\mathbf{p}|<2^{-n-1}$, $|\varphi(\mathbf{q})-\mathbf{q}|<2^{-n-1}$, we have $(\mathbf{p},\mathbf{q})\notin K$ and $|\mathbf{p}-\mathbf{q}|\leq \delta$. Assume $\lambda_1=0$. If $\lambda_n=0$, choose $(\mathbf{p}_n, \mathbf{q}_n)\in K$ such that $\max(|\varphi(\mathbf{p}_n)-\mathbf{p}_n|, |\varphi(\mathbf{q}_n)-\mathbf{q}_n|)<2^{-n}$, and if $\lambda_n=1$, set $\mathbf{p}_n=\mathbf{q}_n=\mathbf{r}_n$. Then, $|\varphi(\mathbf{p}_n)-\mathbf{p}_n|\longrightarrow 0$ and $|\varphi(\mathbf{q}_n)-\mathbf{q}_n|\longrightarrow 0$, so $|\mathbf{p}_n-\mathbf{q}_n|\longrightarrow 0$. Computing $N$ such that $|\mathbf{p}_N-\mathbf{q}_N|<\delta$, we must have $\lambda_N=1$. Thus, we have completed the proof of the existence of a point which satisfies
\begin{equation}
\mathbf{p}=\varphi(\mathbf{p}).\label{con1}
\end{equation}

\item Let $\mathbf{p}^*=(p^*_0, p^*_1, \dots, p^*_n)$ be one of the points which satisfy (\ref{con1}). Let us consider the relationship between the price and excess demand of each good in that case. From the definitions of $\varphi$ and $\mathbf{v}$, and $\sum_{j=0}^nv_j=1+\sum_{j=0}^n\max(f_j,0)$(because $\sum_{j=0}^np^*_j=1$), (\ref{con1}) means
\[ \frac{p^*_i+\max(f_i,0)}{1+\sum_{j=0}^n\max(f_j,0)}=p^*_i\]
Let $\lambda=\sum_{j=0}^n\max(f_j,0)$. Then, we have
\[\max(f_i,0)=\lambda p^*_i.\]
From $\sum_{j=0}^np^*_j=1$ there is a $k$ such that $p^*_k>0$. If for all such $k$ $\max(f_k,0)=f_k=\lambda p^*_k>0$ holds, that is, excess demands for all goods with positive prices are positive, we can not cancel out $p^*_kf_k>0$ because the price of any good can not be negative, and the Walras law (\ref{wal}) is violated. Thus, we have $\lambda=0$ and
\begin{equation}
\max(f_i,0)=0\ \mathrm{for\ each}\ i. \label{eq1}
\end{equation}
This means that excess demand for each good is not positive. Such a state is an \emph{equilibrium} of a competitive economy. In the equilibrium when ${p}_i>0$ we must have $\varphi_i=0$ because if $\varphi_i<0$ we have ${p}_if_i<0$, and then the Walras law is violated. We have completed the proof of the existence of an equilibrium in a competitive economy with sequentially locally non-constant excess demand functions.

\end{enumerate}
\end{proof}

\section{From the existence of a competitive equilibrium to Sperner's lemma}

\begin{figure}[t]
\begin{center}
\includegraphics[height=7.5cm]{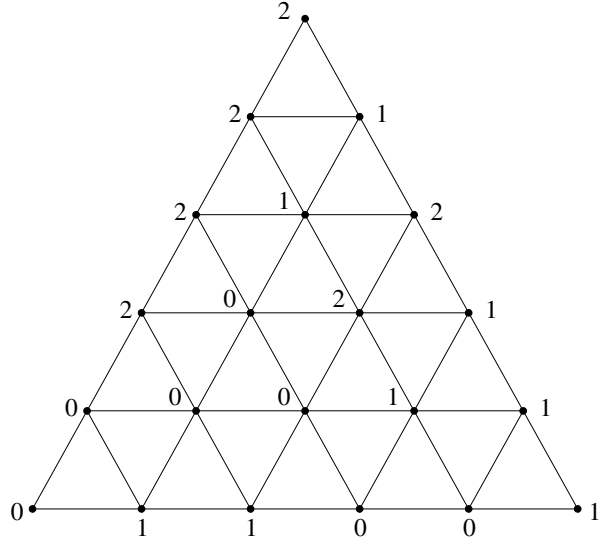}
\end{center}
	\vspace*{-.3cm}
	\caption{Partition and labeling of 2-dimensional simplex}
	\label{tria2}
\end{figure}

In this section we will derive Sperner's lemma from the existence of an equilibrium in a competitive economy\footnote{Our result in this section is a variant of Uzawa equivalence theorem (\cite{uzawa}) which (classically) states that the existence of a competitive equilibrium and Brouwer's fixed point theorem are equivalent.}. Let partition an $n$-dimensional simplex $\Delta$ in the way depicted in Figure \ref{tria2} for a 2-dimensional simplex. In a 2-dimensional case we divide each side of $\Delta$ in $m$ equal segments, and draw the lines parallel to the sides of $\Delta$. Then, the 2-dimensional simplex is partitioned into $m^2$ triangles. We consider partition of $\Delta$ inductively for cases of higher dimension. In a 3 dimensional case each face of $\Delta$ is a 2-dimensional simplex, and so it is partitioned into $m^2$ triangles in the above mentioned way, and draw the planes parallel to the faces of $\Delta$. Then, the 3-dimensional simplex is partitioned into $m^3$ trigonal pyramids. And similarly for cases of higher dimension. Denote the set of small $n$-dimensional simplices of $\Delta$ constructed by partition by $K$. Vertices of these small simplices of $K$ are labeled with the numbers 0, 1, 2, $\dots$, $n$ subject to the following rules. 
\begin{enumerate}
\item The vertices of $\Delta$ are respectively labeled with 0 to $n$. We label a point $(1,0, \dots, 0)$ with 0, a point $(0,1,0, \dots, 0)$ with 1, a point $(0,0,1 \dots, 0)$ with 2, $\dots$, a point $(0,\dots, 0,1)$ with $n$. That is, a vertex whose $k$-th coordinate ($k=0, 1, \dots, n$) is $1$ and all other coordinates are 0 is labeled with $k$ for all $k\in \{0, 1, \dots, n\}$. 

\item If a vertex of a simplex of $K$ is contained in an $n-1$-dimensional face of $\Delta$, then this vertex is labeled with some number which is the same as the number of a vertex of that face.

\item If a vertex of a simplex of $K$ is contained in an $n-2$-dimensional face of $\Delta$, then this vertex is labeled with some number which is the same as the number of a vertex of that face. And similarly for cases of lower dimension.

\item A vertex contained inside of $\Delta$ is labeled with an arbitrary number among 0, 1, $\dots$, $n$.
\end{enumerate}
Denote the vertices of an $n$-dimensional simplex of $K$ by $\mathbf{p}^0, \mathbf{p}^1, \dots, \mathbf{p}^n$, the $j$-th component of $\mathbf{p}^i$ by $\mathbf{p}^i_j$, and the label of $\mathbf{p}^i$ by $l(\mathbf{p}^i)$. Let $\tau$ be a positive number which is smaller than $\mathbf{p}^i_{l(\mathbf{p}^i)}$ for all $i$, and define a function $\varphi(\mathbf{p}^i)$ as follows\footnote{We refer to \cite{yoseloff} about the definition of this function.};
\[\varphi(\mathbf{p}^i)=(\varphi_0(\mathbf{p}^i), \varphi_1(\mathbf{p}^i), \dots, \varphi_n(\mathbf{p}^i)),\]
and
\begin{equation}
\varphi_j(\mathbf{p}^i)=\left\{
\begin{array}{ll}
\mathbf{p}^i_j-\tau&\mathrm{for}\ j=l(\mathbf{p}^i),\\
\mathbf{p}^i_j+\frac{\tau}{n}&\mathrm{for}\ j\neq l(\mathbf{p}^i).\label{e0}
\end{array}
\right.
\end{equation}
$\varphi_j$ denotes the $j$-th component of $\varphi$. From the labeling rules $\mathbf{p}^i_{l(\mathbf{p}^i)}>0$ for all $\mathbf{p}^i$, and so $\tau>0$ is well defined. Since $\sum_{j=0}^n\varphi_j(\mathbf{p}^i)=\sum_{j=0}^n\mathbf{p}^i_j=1$, we have
\[\varphi(\mathbf{p}^i)\in \Delta.\]
We extend $\varphi$ to all points in the simplex by convex combinations of its values on the vertices of the simplex. Let $\mathbf{q}$ be a point in the $n$-dimensional simplex of $K$ whose vertices are $\mathbf{p}^0, \mathbf{p}^1, \dots, \mathbf{p}^n$. Then, $\mathbf{q}$ and $\varphi(\mathbf{q})$ are represented as follows;
\[\mathbf{q}=\sum_{i=0}^n\lambda_i\mathbf{p}^i,\ \mathrm{and}\ \varphi(\mathbf{q})=\sum_{i=0}^n\lambda_i\varphi(\mathbf{p}^i),\ \lambda_i\geq 0,\ \sum_{i=0}^n\lambda_i=1.\]
It is clear that $\varphi$ is uniformly continuous. We verify that $\varphi$ is sequentially locally non-constant.
\begin{enumerate}
\item Assume that a point $\mathbf{r}$ is contained in an $n-1$-dimensional small simplex $\delta^{n-1}$ constructed by partition of an $n-1$-dimensional face of $\Delta$ such that its $i$-th coordinate is $\mathbf{r}_i=0$. Denote the vertices of $\delta^{n-1}$ by $\mathbf{r}^j,\ j=0, 1, \dots, n-1$ and their $i$-th coordinate by $\mathbf{r}_i^j$. Then, we have
\[\varphi_i(\mathbf{r})=\sum_{j=0}^{n-1}\lambda_j\varphi_i(\mathbf{r}^j),\ \lambda_j\geq 0,\ \sum_{j=0}^n\lambda_j=1.\]
Since all vertices of $\delta^{n-1}$ are not labeled with $i$, (\ref{e0}) means $\varphi_i(\mathbf{r}^j)>\mathbf{r}_i^j$ for all $j=\{0, 1, \dots, n-1\}$. Then, there exists no sequence $(\mathbf{r}_m)_{m\geq 1}$ such that $|\varphi(\mathbf{r}_m)-\mathbf{r}_m|\longrightarrow 0$ in an $n-1$-dimensional face of $\Delta$. Similarly for a simplex in a face of $\Delta$ of lower dimension.
\item Let $\mathbf{r}$ be a point in an $n$-dimensional simplex $\delta^n$ of $K$. Assume that no vertex of $\delta^n$ is labeled with $i$. Then
\begin{equation}
\varphi_i(\mathbf{r})=\sum_{j=0}^n\lambda_j\varphi_i(\mathbf{p}^j)=\mathbf{r}_i+\left(1+\frac{1}{n}\right)\tau,\label{e8}
\end{equation}
and so $\mathbf{r}\neq \varphi(\mathbf{r})$. Then, there exists no sequence $(\mathbf{r}_m)_{m\geq 1}$ such that $|\varphi(\mathbf{r}_m)-\mathbf{r}_m|\longrightarrow 0$ in $\delta^n$.
\item Assume that $\mathbf{r}$ is contained in a fully labeled $n$-dimensional simplex $\delta^n$, and rename vertices of $\delta^n$ so that a vertex $\mathbf{p}^i$ is labeled with $i$ for each $i$. Then,
\begin{align*}
\varphi_i(\mathbf{r})=\sum_{j=0}^n\lambda_j\varphi_i(\mathbf{p}^j)=\sum_{j=0}^n\lambda_j\mathbf{p}_i^j+\sum_{j\neq i}\lambda_j\frac{\tau}{n}-\lambda_i\tau=\mathbf{r}_i+\left(\frac{1}{n}\sum_{j\neq i}\lambda_j-\lambda_i\right)\tau\ \mathrm{for\ each}\ i.
\end{align*}
Consider sequences $(\mathbf{r}_m)_{m\geq 1}$ and $(\mathbf{r}'_m)_{m\geq 1}$ such that $|\varphi(\mathbf{r}_m)-\mathbf{r}_m|\longrightarrow 0$ and $|\varphi(\mathbf{r}'_m)-\mathbf{r}'_m|\longrightarrow 0$. Let $\mathbf{r}_m=\sum_{i=0}^n\lambda(m)_i\mathbf{p}^i$ with $\lambda(m)_i\geq 0,\ \sum_{i=0}^n\lambda(m)_i=1$ and $\mathbf{r}'_m=\sum_{i=0}^n\lambda'(m)_i\mathbf{p}^i$ with $\lambda'(m)_i\geq 0,\ \sum_{i=0}^n\lambda'(m)_i=1$. Then, we have
\[\frac{1}{n}\sum_{j\neq i}\lambda(m)_j-\lambda(m)_i\longrightarrow 0,\ \mathrm{and}\ \frac{1}{n}\sum_{j\neq i}\lambda'(m)_j-\lambda'(m)_i\longrightarrow 0\ \mathrm{for\ all}\ i.\]
Therefore, we obtain
\[\lambda(m)_i\longrightarrow \frac{1}{n+1}, \mathrm{and}\ \lambda'(m)_i\longrightarrow \frac{1}{n+1}.\]
These mean
\[|\mathbf{r}_m-\mathbf{r}'_m|\longrightarrow 0.\]
\end{enumerate}
Thus, $\varphi$ is sequentially locally non-constant

Now, using $\varphi$, we construct an excess demand function $g=(g_0, g_1, \dots, g_n)$ as follows;
\begin{equation}
g_i(\mathbf{p})=\varphi_i(\mathbf{p})-\mathbf{p}_i\mu(\mathbf{p}),\ i=0, 1, \dots, n.\label{uz1}
\end{equation}
$\mathbf{p}\in \Delta$, and $\mu(\mathbf{p})$ is defined by
\[ \mu(\mathbf{p})=\frac{\sum^n_{i=0}\mathbf{p}_i\varphi_i(\mathbf{p})}{\sum^n_{i=0}\mathbf{p}_i^2}.\]
$g$ is uniformly continuous, and satisfies the Walras law as shown below. Multiplying $\mathbf{p}_i$ to (\ref{uz1}) for each $i$, and adding them from 0 to $n$ yields
\begin{eqnarray}
\sum^{n}_{i=0}\mathbf{p}_ig_i&=&\sum^{n}_{i=0}\mathbf{p}_i\varphi_i(\mathbf{p})-\mu(\mathbf{p})\sum^{n}_{i=0}\mathbf{p}_i^2=\sum^{n}_{i=0}\mathbf{p}_i\varphi_i(\mathbf{p})-\frac{\sum^n_{i=0}\mathbf{p}_i\varphi_i(\mathbf{p})}{\sum^n_{i=0}\mathbf{p}_i^2}\sum^{n}_{i=0}\mathbf{p}_i^2 \notag \\
&=&\sum^{n}_{i=0}\mathbf{p}_i\varphi_i(\mathbf{p})-\sum^{n}_{i=0}\mathbf{p}_i\varphi_i(\mathbf{p})=0.\label{wal2}
\end{eqnarray}
Therefore, $g_i(\mathbf{p})$'s satisfy the conditions for excess demand functions, and because of sequential local non-constancy of $\varphi$ they are sequentially locally non-constant expressed as follows;.
\begin{quote}
Assume $\max(g_i, 0)\longrightarrow 0$ for all $i$. For (\ref{wal2}) to be satisfied $|\varphi_i(\mathbf{p})-p_i\mu(\mathbf{p})|\longrightarrow 0$ for $i$ such that $p_i>0$, and so $\mu(\mathbf{p})\longrightarrow 1$. Thus, $|\varphi_i(\mathbf{p})-p_i\mu|\longrightarrow 0$ for $i$ such that $p_i>0$. For $i$ such that $p_i=0$, $\varphi_i\geq 0$ with $\max(g_i, 0)\longrightarrow 0$ implies $|\varphi_i(\mathbf{p})-p_i\mu(\mathbf{p})|\longrightarrow 0$. 
\end{quote}
Therefore, sequential local non-constancy of $\varphi$ implies sequential local non-constancy of $g$, and there exists an equilibrium. Let $\mathbf{p}^*=\{\mathbf{p}_0^*,\mathbf{p}_1^*,\dots,\mathbf{p}_n^*\}$ be the equilibrium price vector. Then,
\[g_i(\mathbf{p}^*)\leq 0\ \mathrm{for\ all}\ i,\]
and
\[g_i(\mathbf{p}^*)=0\ \mathrm{for}\ i\ \mathrm{such\ that}\ \mathbf{p}^*_i>0,\]
and so $\varphi_i(\mathbf{p}^*)=\mu(\mathbf{p}^*)\mathbf{p}_i^*$ for all such $i$. $\sum_{i=0}^n\varphi_i(\mathbf{p}^*)=\sum_{i=0}^n\mathbf{p}_i^*=1$ implies $\mu(\mathbf{p}^*)\leq 1$. On the other hand, $g_i(\mathbf{p}^*)\leq 0$(for all $i$) means $\mu(\mathbf{p}^*)\geq 1$. Thus, $\mu(\mathbf{p}^*)=1$, and we obtain
\begin{equation}
\varphi_i(\mathbf{p}^*)=\mathbf{p}_i^*\ \mathrm{for\ all}\ i.\label{e5}
\end{equation}
Let $\delta^*$ be a simplex of $K$ which contains $\mathbf{p}^*$, and $\mathbf{p}^0, \mathbf{p}^1, \dots, \mathbf{p}^n$ be the vertices of $\delta^*$. Then, $\mathbf{p}^*$ and $\varphi(\mathbf{p}^*)$ are represented as
\[\mathbf{p}^*=\sum_{i=0}^n\lambda_i\mathbf{p}^i\ \mathrm{and}\ \varphi(\mathbf{p}^*)=\sum_{i=0}^n\lambda_i\varphi(\mathbf{p}^i),\ \lambda_i\geq 0,\ \sum_{i=0}^n\lambda_i=1.\]
(\ref{e0}) implies that if only one $\mathbf{p}^k$ among $\mathbf{p}^0, \mathbf{p}^1, \dots, \mathbf{p}^n$ is labeled with $i$, we have
\[ \varphi_i(\mathbf{p}^*)-\mathbf{p}^*_i=\sum_{j=0}^n\lambda_j\mathbf{p}_i^j+\sum_{j=0, j\neq k}\lambda_j\frac{\tau}{n}-\lambda_k\tau-\mathbf{p}_i^*=\left(\frac{1}{n}\sum_{j=0, j\neq k}\lambda_j-\lambda_k\right)\tau.\]
$\mathbf{p}^j_i$ is the $i$-th component of $\mathbf{p}^j$.

Since $\tau>0$, $\varphi_i(\mathbf{p}^*)=\mathbf{p}^*_i$(for all $i$) is equivalent to
\[\frac{1}{n}\sum_{j=0, j\neq k}\lambda_j-\lambda_k=0.\]
(\ref{e5}) is satisfied with $\lambda_k=\frac{1}{n+1}$ for all $k$. On the other hand, if no $\mathbf{p}^j$ is labeled with $i$, we have
\[\varphi_i(\mathbf{p}^*)=\sum_{j=0}^n\lambda_j\mathbf{p}_i^j=\mathbf{p}_i^*+\left(1+\frac{1}{n}\right)\tau,\]
and then (\ref{e5}) can not be satisfied. Thus, for each $i$ one and only one $\mathbf{p}^j$ must be labeled with $i$. Therefore, $\Delta^*$ must be a fully labeled simplex. We have completed the proof of Sperner's lemma.

\bibliography{yasuhito}

\end{document}